\theoremstyle{plain}
\newtheorem{theorem}{Theorem}[section]
\newtheorem{lemma}[theorem]{Lemma}
\newtheorem{corollary}[theorem]{Corollary}
\newtheorem{proposition}[theorem]{Proposition}
\theoremstyle{definition}
\newtheorem{definition}{Definition}[section]
\newtheorem{example}{Example}[section]
\theoremstyle{remark}
\newtheorem{remark}{Remark}[section]
\newcommand{\N}{\mathbf N}
\newcommand{\f}{\varphi}
\newcommand{\e}{\varepsilon}
\renewcommand{\a}{\alpha}
\numberwithin{equation}{section} 
\begin{document}
\title[Compact and "compact" operators]{Compact and "compact" operators on the standard Hilbert module over a $W^*$ algebra} 

\author{Dragoljub J. Ke\v cki\' c}
\address{University of Belgrade\\ Faculty of Mathematics\\ Student\/ski trg 16-18\\ 11000 Beograd\\ Serbia}

\email{keckic@matf.bg.ac.rs}

\author{Zlatko Lazovi\'c}

\address{University of Belgrade\\ Faculty of Mathematics\\ Student\/ski trg 16-18\\ 11000 Beograd\\ Serbia}

\email{zlatkol@matf.bg.ac.rs}

\thanks{The authors was supported in part by the Ministry of education and science, Republic of Serbia, Grant \#174034.}

\begin{abstract}
We construct a topology on the standard Hilbert module $l^2(\mathcal A)$ over a unital $W^*$-algebra
$\mathcal A$ such that any "compact" operator, (i.e.\ any operator in the norm closure of the linear span of
the operators of the form $x\mapsto\left<y,x\right>z$) maps bounded sets into totally bounded sets.
\end{abstract}


\subjclass[2010]{Primary: 46L08, Secondary: 47B07, 54E15}

\keywords{Hilbert module, uniform spaces, compact operators}

\maketitle

\section{Introduction}

Given a unital $W^*$-algebra $\mathcal A$ we consider the standard Hilbert module denoted by $l^2(\mathcal
A)$, (the notation $\mathcal H_{\mathcal A}$ is also widespread)
$$l^2(\mathcal A)=\{x=(\xi_1,\xi_2,\dots)\;|\;\xi_j\in\mathcal A,\sum_{j=1}^{+\infty}\xi^*_j\xi_j\mbox{
converges in the norm topology}\},$$
equipped with the $\mathcal A$-valued inner product
$$l^2(\mathcal A)\times l^2(\mathcal A)\ni(x,y)\mapsto\sum_{j=1}^{+\infty}\xi_j^*\eta_j\in\mathcal A,\qquad x=(\xi_1,\xi_2,\dots),\quad y=(\eta_1,\eta_2,\dots).$$

Since an arbitrary $\mathcal A$-linear bounded operator on $l^2(\mathcal A)$ does not need to have an
adjoint, the natural algebra of operators is $B^a(l^2(\mathcal A))$ - the algebra of all $\mathcal A$-linear
bounded operators on $l^2(\mathcal A)$ having an adjoint. It is known that $B^a(l^2(\mathcal A))$ is a
$C^*$-algebra and also that it is a $W^*$-algebra whenever $\mathcal A$ is of that kind.

Among all operators in $B^a(l^2(\mathcal A))$, those that belong to the linear span of the operators of the
form $x\mapsto\Theta_{y,z}(x)=z\left<y,x\right>$ ($y$, $z\in l^2(\mathcal A)$) are called {\em finite rank
operators}. The norm closure of finite rank operators is known as the algebra of all "compact" operators. The
quotation marks are usually written in order to emphasize the fact that "compact" operators does not maps
bounded sets into relatively compact sets, as it is the case in the framework of Hilbert (and also Banach)
spaces, though they share many properties of proper compact operators on a Hilbert space, \cite{Manuilov1},
\cite{Manuilov2}

For general literature concerning Hilbert modules over more general $C^*$ algebras, including the standard
Hilbert module, the reader is referred to \cite{Lance} or \cite{MT}.

The aim of this note is to introduce a locally convex topology on $l^2(\mathcal A)$, where $\mathcal A$ is a
unital $W^*$-algebra, such that any "compact" operator maps bounded sets (in the norm) into totally bounded
in the introduced topology. In a very special case, where $\mathcal A\cong B(H)$ the algebra of all bounded
operators on a Hilbert space, the converse is also true. Namely, any operator $T\in B^a(l^2(\mathcal A))$
that maps bounded into totally bounded sets is "compact". Therefore, speaking freely, we can omit the
quotation marks.

\section{Preliminaries}

Let us recall some basic definitions and facts concerning uniform spaces. For more details see
\cite{Bourbaki} or \cite{Kelly}.

Uniform spaces are those topological spaces in which one can deal with notions such as Cauchy sequence,
Cauchy net or uniform continuity. Although it is usual to define them as spaces endowed with a family of sets
in $X\times X$ given as some kind of neighborhoods of the diagonal, so called {\em entourages}, for our
purpose it is more convenient to give an equivalent definition, via a family of semimetrics.

\begin{definition} A nonempty set endowed with a family of semimetrics, functions $d_\a:X\times
X\to[0,+\infty)$ satisfying ($i$) $d_\a(x,y)\ge0$; ($ii$) $d_\a(x,y)=d_\a(y,x)$; ($iii$) $d_\a(x,z)\le
d_\a(x,y)+d_\a(y,z)$ is called a uniform space.

All $d_\a$ are metrics except they do not distinguish points, i.e.\ there might be $d_\a(x,y)=0$ for some
$x\neq y$. However it is provided that for all $x\neq y$ there is an $\a$ such that $d_\a(x,y)>0$.
\end{definition}

The family of sets $B_{d_\a}(x;\e)=\{y\in X\;|\;d_\a(x,y)<\e\}$ makes a basis for some topology. It is well
known that a topological space $X$ is a uniform space if and only if it is completely regular.

Let $X$ be a uniform space. For a net $x_i\in X$ we say that it is a Cauchy net if it is a Cauchy net with
respect to all $d_\a$, i.e.\ if for all $\a$ and for all $\e>0$ there is $i_0$ such that for all $i$, $j>i_0$
we have $d_\a(x_i,x_j)<\e$. The notion of a complete uniform space is defined in an obvious way.

A set $A\subseteq X$ is called totally bounded, if for all $\e>0$ and all $\a$ there is a finite set $c_1$,
$c_2$, $\dots$, $c_m\in X$ such that $B_\a(c_j;\e)=\{y\in X\;|\;d_\a(c_j,y)<\e\}$ cover $A$. It is well known
that any relatively compact set is totally bounded, and that the converse is true provided that $X$ is
complete.

If $X$ is not complete then there are totally bounded sets that are not relatively compact, for instance,
$\mathbb Q\cap[0,1]$ as a subset of $\mathbb Q$. (See also \cite[Remark 4.2.2]{Bourbaki})

Any locally convex topological vector space is a uniform space. Indeed, there is a family of seminorms
generating its topology. This family can be obtained by Minkowski functionals of basic neighborhoods of zero.
And an arbitrary seminorm define a semimetric in a natural way. Conversely, any family of seminorms that
distinguishes points leads to a locally convex Hausdorff topological vector space. Hence a family of
seminorms allows to deal with notions: totally bounded set, complete space, Cauchy net, etc.

\section{Topology}

For an arbitrary Hilbert $W^*$-module $\mathcal M$, Paschke \cite{Paschke}, \cite{Paschke2} in his initial
works on Hilbert $C^*$-modules and Frank \cite{Frank90}, introduced two topologies, $\tau_1$ and $\tau_2$,
the first of them generated by functionals $x\mapsto\f(\left<y,x\right>)$, $y\in\mathcal M$, $\f$ normal
state, and the second by seminorms $p(x)=\f(\left<x,x\right>)^{1/2}$, $\f$ normal state. Frank proved that
$\mathcal M$ is self-dual if and only if the unit ball in $\mathcal M$ is complete in $\tau_1$ (and this is
equivalent to the completeness in $\tau_2$). Therefore, if $\mathcal M=l^2(\mathcal A)$ is a standard Hilbert
module, it is not complete neither in $\tau_1$ nor in $\tau_2$, since $l^2(\mathcal A)$ is never self-dual,
except in the case where $\mathcal A$ is finite dimensional algebra. Since obviously $\tau_1\subset\tau_2$,
we shall refer to these topologies as to {\em weak PF} and {\em strong PF} topologies.

However, we need a topology which is between weak and strong PF topology. Namely, on a standard Hilbert
module $l^2(\mathcal A)$ where $\mathcal A$ is a unital $W^*$ algebra we define a locally convex topology
$\tau$ by the family of seminorms
\begin{equation}\label{seminorms}
p_{\f,y}(x)=\sqrt{\sum_{j=1}^{+\infty}|\f(\eta_j^*\xi_j)|^2},
\end{equation}
where $\f$ is a normal state, and $y=(\eta_1,\eta_2,\dots)$ is a sequence of elements in $\mathcal A$ such
that
\begin{equation}\label{admissible_sequence}
\sup_{j\ge1}\f(\eta_j^*\eta_j)=1.
\end{equation}

\begin{proposition}\label{ComparisonTopologies}
 Seminorms (\ref{seminorms}) are well defined. Also $\tau_1\subset\tau\subset\tau_2$.
\end{proposition}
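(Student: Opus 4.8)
The plan is to derive all three assertions from the Cauchy--Schwarz inequality for the positive functional $a\mapsto\f(a^*b)$, using only positivity and norm-continuity of the normal state $\f$ to commute it with the norm-convergent series $\sum_j\xi_j^*\xi_j$.

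First I would treat well-definedness together with $\tau\subset\tau_2$. Fix $\f$ and an admissible $y=(\eta_1,\eta_2,\dots)$ satisfying $(\ref{admissible_sequence})$. Cauchy--Schwarz for $\f$ gives $|\f(\eta_j^*\xi_j)|^2\le\f(\eta_j^*\eta_j)\f(\xi_j^*\xi_j)\le\f(\xi_j^*\xi_j)$, the last step being exactly the normalization $\sup_j\f(\eta_j^*\eta_j)=1$. Summing and using that the partial sums of $\sum_j\xi_j^*\xi_j$ converge in norm to $\skp{x}{x}$ while $\f$ is norm-continuous, I obtain $\sum_j|\f(\eta_j^*\xi_j)|^2\le\sum_j\f(\xi_j^*\xi_j)=\f(\skp{x}{x})<+\infty$. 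This shows $(\ref{seminorms})$ is finite, hence a genuine seminorm (homogeneity and the triangle inequality are inherited from the $l^2$-norm), and at the same time yields $p_{\f,y}(x)\le\f(\skp{x}{x})^{1/2}$. Since the right-hand side is a generating seminorm of $\tau_2$, every basic $\tau$-ball $\{x:p_{\f,y}(x)<\e\}$ contains the $\tau_2$-ball $\{x:\f(\skp{x}{x})^{1/2}<\e\}$, so $\tau\subset\tau_2$.

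For $\tau_1\subset\tau$ I would start from a generating $\tau_1$-seminorm $q_{\f,y}(x)=|\f(\skp{y}{x})|=\bigl|\sum_j\f(\eta_j^*\xi_j)\bigr|$, now with a genuine element $y\in l^2(\mathcal A)$. The trick is a coordinatewise normalization: set $\eta_j'=\eta_j/\f(\eta_j^*\eta_j)^{1/2}$ wherever $\f(\eta_j^*\eta_j)>0$ (and $\eta_j'=0$ otherwise), so that $y'=(\eta_1',\eta_2',\dots)$ satisfies $\sup_j\f(\eta_j'^*\eta_j')=1$ and is admissible. Factoring $\f(\eta_j^*\xi_j)=\f(\eta_j^*\eta_j)^{1/2}\f(\eta_j'^*\xi_j)$ and applying the scalar Cauchy--Schwarz inequality gives $q_{\f,y}(x)\le\sum_j|\f(\eta_j^*\xi_j)|\le\bigl(\sum_j\f(\eta_j^*\eta_j)\bigr)^{1/2}\bigl(\sum_j|\f(\eta_j'^*\xi_j)|^2\bigr)^{1/2}=\f(\skp{y}{y})^{1/2}\,p_{\f,y'}(x)$, where $\f(\skp{y}{y})$ is finite precisely because $y\in l^2(\mathcal A)$. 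Thus each $\tau_1$-seminorm is dominated by a fixed multiple of a $\tau$-seminorm, which gives $\tau_1\subset\tau$.

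The only delicate point is the bookkeeping at indices where $\f(\eta_j^*\eta_j)=0$: there the same Cauchy--Schwarz forces $\f(\eta_j^*\xi_j)=0$, so those terms disappear from every sum and may be set to zero in $y'$ without harming the estimate; one just needs a single index with $\f(\eta_j^*\eta_j)>0$ to make the supremum equal $1$, and if none exists then $q_{\f,y}$ vanishes identically and the inclusion is trivial. I expect no further obstruction, since the argument rests entirely on positivity and norm-continuity of $\f$ and not on any finer $W^*$-structure.
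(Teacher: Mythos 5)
Your proof is correct and follows essentially the same route as the paper's: the Cauchy--Schwarz inequality for the semi inner product $(\xi,\eta)\mapsto\f(\eta^*\xi)$ combined with the normalization $\sup_j\f(\eta_j^*\eta_j)=1$ gives well-definedness and $\tau\subset\tau_2$ in one chain, and the coordinatewise renormalization $\zeta_j=\eta_j/\f(\eta_j^*\eta_j)^{1/2}$ followed by the scalar Cauchy--Schwarz inequality gives $\tau_1\subset\tau$, exactly as in the paper. Your explicit treatment of the indices where $\f(\eta_j^*\eta_j)=0$ (and of the degenerate case where all of them vanish) is a small point the paper dismisses as obvious, but it does not change the argument.
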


\begin{proof} Since $(\xi,\eta)\mapsto\f(\eta^*\xi)$ is a semi inner product, we have
$|\f(\eta_j^*\xi_j)|^2\le\f(\xi_j^*\xi_j)\f(\eta_j^*\eta_j)$. By this, and by (\ref{admissible_sequence}) we
have
\begin{equation}\label{tau<tau_2}
p_{\f,y}(x)^2=\sum_{j=1}^{+\infty}|\f(\eta_j^*\xi_j)|^2\le\sum_{j=1}^{+\infty}\f(\xi_j^*\xi_j)\f(\eta_j^*\eta_j)\le\sum_{j=1}^{+\infty}\f(\xi_j^*\xi_j)=\f(\left<x,x\right>).
\end{equation}
This proves that seminorms (\ref{seminorms}) are well defined, and also that $\tau\subset\tau_2$.

To prove $\tau_1\subset\tau$, pick $y\in l^2(\mathcal A)$, $y=(\eta_1,\eta_2,\dots)$. The sequence $\zeta_j$
given by $\zeta_j=\eta_j/\f(\eta_j^*\eta_j)^{1/2}$ if $\f(\eta_j^*\eta_j)\neq0$, and $\zeta_j=0$ otherwise
obviously fulfils (\ref{admissible_sequence}). Hence
\begin{align*}
|\f(\left<y,x\right>)|&=\left|\f\left(\sum_{j=1}^{+\infty}\eta_j^*\xi_j\right)\right|=\left|\sum_{j=1}^{+\infty}\f(\eta_j^*\eta_j)^{1/2}\f(\zeta_j^*\xi_j)\right|\le\\
    &\le\left(\sum_{j=1}^{+\infty}\f(\eta_j^*\eta_j)\right)^{1/2}\left(\sum_{j=1}^{+\infty}|\f(\zeta_j^*\xi_j)|^2\right)^{1/2}=\f(\left<y,y\right>)^{1/2}p_{\f,z}(x),
\end{align*}
finishing the proof.
\end{proof}

\begin{remark} The dual module of the module $\mathcal M$ is defined as the module of all $\mathcal A$-linear
and $\mathcal A$-valued bounded functionals. It is denoted by $\mathcal M'$. The module $\mathcal M$ always
can be embedded in $\mathcal M'$ via $\mathcal M\ni y\mapsto\Lambda_y\in\mathcal M'$,
$\Lambda_y(x)=\left<y,x\right>$. If this embedding is onto, the module $\mathcal M$ is called {\em
self-dual}.

It is well known that $l^2(\mathcal A)$ is not self-dual, except the algebra $\mathcal A$ is finite
dimensional. Namely, $l^2(\mathcal A)'$ can be described as the module of all sequences
$x=(\xi_1,\xi_2,\dots)$ such that the sequence of sums $\sum_{j=1}^n\xi_j^*\xi_j$ is norm bounded,
\cite[Proposition 2.5.5]{MT}.

Reading carefully the proof of the preceding proposition, one can see that nothing is changed if we replace
$l^2(\mathcal A)$ by $l^2(\mathcal A)'$. Indeed, the entire proof does not depend on the norm convergence of
the series $\sum_{j=1}^{+\infty}\xi_j^*\xi_j$.
\end{remark}

\begin{proposition} The unit ball in $l^2(\mathcal A)$ is not complete with respect to $\tau$. Its completion
is the unit ball in the dual module $l^2(\mathcal A)'$.
\end{proposition}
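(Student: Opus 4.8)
The plan is to realise the completion concretely as the unit ball of the dual module. Write $B$ for the unit ball of $l^2(\mathcal A)$ and $B'$ for the unit ball of $l^2(\mathcal A)'$, that is, the set of sequences $x=(\xi_1,\xi_2,\dots)$ with $\sup_n\|\sum_{j=1}^n\xi_j^*\xi_j\|\le1$. By the Remark following Proposition~\ref{ComparisonTopologies} the seminorms $p_{\f,y}$ extend verbatim to $l^2(\mathcal A)'$ and still satisfy $\tau_1\subset\tau$ there; in particular $\tau$ is Hausdorff on $l^2(\mathcal A)'$ because $\tau_1$ already separates points (if $\f(\skp{e_j}{x})=0$ for all normal $\f$ and all $j$, then every coordinate of $x$ vanishes). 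I would then establish two facts: (i) $B$ is $\tau$-dense in $B'$, and (ii) $B'$ is $\tau$-complete. Granting these, $B'$ is a complete Hausdorff uniform space carrying the subspace uniformity of the same seminorms, in which $B$ sits densely, so $B'$ is exactly the completion of $B$. Since $l^2(\mathcal A)\subsetneq l^2(\mathcal A)'$ whenever $\mathcal A$ is infinite dimensional, rescaling any element of $l^2(\mathcal A)'\setminus l^2(\mathcal A)$ produces a point of $B'\setminus B$; hence $B$ is not closed in $B'$ and therefore not complete.

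For the density (i) I would approximate $x=(\xi_1,\xi_2,\dots)\in B'$ by the truncations $x^{(n)}=(\xi_1,\dots,\xi_n,0,0,\dots)$. Each $x^{(n)}$ has finite support, so $x^{(n)}\in l^2(\mathcal A)$, and $\|\skp{x^{(n)}}{x^{(n)}}\|=\|\sum_{j=1}^n\xi_j^*\xi_j\|\le1$ shows $x^{(n)}\in B$. The estimate already used in Proposition~\ref{ComparisonTopologies}, namely $|\f(\eta_j^*\xi_j)|^2\le\f(\xi_j^*\xi_j)\f(\eta_j^*\eta_j)\le\f(\xi_j^*\xi_j)$, gives $p_{\f,y}(x-x^{(n)})^2\le\sum_{j>n}\f(\xi_j^*\xi_j)$. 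Because the partial sums $\sum_{j=1}^n\xi_j^*\xi_j$ are norm bounded by $1$ and $\f$ is a state, the nonnegative series $\sum_j\f(\xi_j^*\xi_j)$ converges, so its tail tends to $0$; handling finitely many seminorms at once, $x^{(n)}\to x$ in $\tau$.

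The completeness (ii) is the heart of the matter, and the step I expect to be the main obstacle. Let $(x_i)$, $x_i=(\xi_1^{(i)},\xi_2^{(i)},\dots)$, be a $\tau$-Cauchy net in $B'$. It is a fortiori $\tau_1$-Cauchy, so testing against the coordinate vectors shows that $\f(\xi_j^{(i)})$ is Cauchy for every normal state $\f$ and every $j$; as $\|\xi_j^{(i)}\|\le1$, the $\sigma$-weak compactness of the unit ball of $\mathcal A$ yields a coordinatewise $\sigma$-weak limit $\xi_j^{(i)}\to\xi_j$ with $\|\xi_j\|\le1$. The difficulty is that the limit comes only in the coarse topology $\tau_1$, and two lower-semicontinuity arguments are needed to upgrade it. First, to see $x=(\xi_1,\xi_2,\dots)\in B'$, use that $a\mapsto\f(a^*a)$ is $\sigma$-weakly lower semicontinuous on bounded sets (being the supremum of the $\sigma$-weakly continuous affine maps $a\mapsto2\,\mathrm{Re}\,\f(b^*a)-\f(b^*b)$, $b\in\mathcal A$); this gives $\f(\xi_j^*\xi_j)\le\liminf_i\f(\xi_j^{(i)*}\xi_j^{(i)})$, and summing over $j\le n$ yields $\sum_{j=1}^n\f(\xi_j^*\xi_j)\le\liminf_i\f(\sum_{j=1}^n\xi_j^{(i)*}\xi_j^{(i)})\le1$ for every normal state $\f$, whence $\|\sum_{j=1}^n\xi_j^*\xi_j\|\le1$. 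Second, to prove the convergence $x_i\to x$ in $\tau$ itself, fix $p_{\f,y}$ and $\e>0$, choose $i_0$ with $p_{\f,y}(x_i-x_{i'})<\e$ for $i,i'>i_0$, and note that each functional $a\mapsto\f(\eta_j^*a)$ is $\sigma$-weakly continuous; hence for every fixed $N$ and every $i>i_0$, $\sum_{j=1}^N|\f(\eta_j^*(\xi_j^{(i)}-\xi_j))|^2=\lim_{i'}\sum_{j=1}^N|\f(\eta_j^*(\xi_j^{(i)}-\xi_j^{(i')}))|^2\le\e^2$, and letting $N\to\infty$ gives $p_{\f,y}(x_i-x)\le\e$ for all $i>i_0$. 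Alternatively one may deduce $x\in B'$ together with its $\tau_1$-completeness from Frank's theorem, since $l^2(\mathcal A)'$ is self-dual, but the passage from $\tau_1$ to $\tau$ still requires the second estimate.
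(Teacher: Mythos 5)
Your proposal is correct and follows essentially the same route as the paper: $\tau$-density of the unit ball of $l^2(\mathcal A)$ in that of $l^2(\mathcal A)'$ via truncations and the estimate (\ref{tau<tau_2}), then $\tau$-completeness of the latter via coordinatewise weak-$*$ limits followed by the same two-step passage to the limit (first over the net index, then over the truncation length). The only cosmetic difference is that where you invoke $\sigma$-weak lower semicontinuity of $a\mapsto\f(a^*a)$ to place the limit in the dual ball, the paper tests against $\eta_j=\xi_j/\f(\xi_j^*\xi_j)^{1/2}$ and uses ultraweak continuity of $a\mapsto\f(\eta^*a)$ --- the same estimate in different clothing; you are in fact slightly more careful than the paper in spelling out that non-completeness of $B$ requires $B\subsetneq B'$.
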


\begin{proof} First, we prove that the unit ball in $l^2(\mathcal A)$ is dense in the unit ball in
$l^2(\mathcal A)'$.

Let $x\in l^2(\mathcal A)'$, $x=(\xi_1,\xi_2,\dots)$. Since the sequence of sums $\sum_{j=1}^n\xi_j^*\xi_j$
is bounded it is convergent in strong (or weak, or ultraweak etc.) topology. By normality of $\f$ we have
$$\f\left(\sum_{j=1}^{+\infty}\xi_j^*\xi_j\right)=\sum_{j=1}^{+\infty}\f(\xi_j^*\xi_j),$$
implying that $\f(\sum_{j=n}^{+\infty}\xi_j^*\xi_j)\to0$, as $n\to+\infty$.

Thus, by the inequality (\ref{tau<tau_2}) $(\xi_1,\xi_2,\dots,\xi_n,0,0,\dots)\to x$ in each seminorm of the
form (\ref{seminorms}).

Next, we prove that $l^2(\mathcal A)'$ is complete. Let $x^\a=(\xi_1^\a,\xi_2^\a,\dots)$ be a Cauchy net in
the unit ball. Choosing an arbitrary normal state, and $\eta_k=1$, $\eta_j=0$ for $j\neq k$ we obtain that
$\xi_k^\a$ is a Cauchy net in weak-$*$ topology in the unit ball in $\mathcal A$. Hence, it is convergent,
say $\xi_k^\a\to\xi_k$ in the weak-$*$ topology.

Since multiplying is ultraweakly continuous, for any $n\in\N$ and for all $\eta_j$ which satisfy
(\ref{admissible_sequence}) we have
$$\sum_{j=1}^k|\f(\eta_j^*\xi_j^\a)|^2\to\sum_{j=1}^k|\f(\eta_j^*\xi_j)|^2.$$

Choosing $\eta_j=\xi_j/\f(\xi_j^*\xi_j)^{1/2}$ if $\f(\xi_j^*\xi_j)\neq0$ and $\eta_j=0$ otherwise, we get
$$\sum_{j=1}^k\f(\xi_j^*\xi_j)=\sum_{k=1}|\f(\eta_j^*\xi_j)|^2=\lim_\a\sum_{j=1}^k|\f(\eta_j^*\xi_j^\a)|^2\le||x||\le1.$$
Taking the limit as $k\to+\infty$ we conclude that $x=(\xi_1,\xi_2,\dots)\in l^2(\mathcal A)'$. To see that
$x$ is the limit of the Cauchy net $x^\a$ it is enough to take the limit over $\beta$ in
$$\sum_{j=1}^k|\f(\eta_j^*\xi_j^\a)-\f(\eta_j^*\xi_j^\beta)|^2\le\sum_{j=1}^{+\infty}|\f(\eta_j^*\xi_j^\a)-\f(\eta_j^*\xi_j^\beta)|^2<\e,$$
and finally the limit as $k\to+\infty$.
\end{proof}

Next, we want to study the restriction of $\tau$ to module $\mathcal A^n$ seen as a submodule of
$l^2(\mathcal A)$ consisting of those $x$ for which $\xi_j=0$ for all $j>n$.

\begin{proposition} a) On $\mathcal A^n$ the weak PF and our topology coincide, i.e.\ we have
$\tau_1|_{\mathcal A^n}=\tau|_{\mathcal A^n}$;

b) The embedding $i:\mathcal A^n\to l^2(\mathcal A)$, $i(\xi_1,\dots,\xi_n)=(\xi_1,\dots,\xi_n,0,\dots)$ is
continuous with respect to $(\tau|_{\mathcal A^n},\tau)$.
\end{proposition}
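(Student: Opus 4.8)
The plan is to prove part a) by establishing the two inclusions separately and then to read off part b) directly from the action of $i$ on the generating seminorms. Since Proposition~\ref{ComparisonTopologies} already gives $\tau_1\subset\tau$, the restriction $\tau_1|_{\mathcal A^n}\subset\tau|_{\mathcal A^n}$ is automatic, so the entire substance of part a) is the reverse inclusion $\tau|_{\mathcal A^n}\subset\tau_1|_{\mathcal A^n}$.

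For that reverse inclusion I would fix a generating seminorm $p_{\f,y}$ of $\tau$ and restrict it to $\mathcal A^n$. For $x=(\xi_1,\dots,\xi_n,0,\dots)$ every term with index $j>n$ vanishes, so $p_{\f,y}(x)^2=\sum_{j=1}^n|\f(\eta_j^*\xi_j)|^2$. The key point is that each summand is itself (the square of) a $\tau_1$-seminorm: putting $z^{(j)}=(0,\dots,0,\eta_j,0,\dots)$ with $\eta_j$ in the $j$-th slot gives $\skp{z^{(j)}}{x}=\eta_j^*\xi_j$, so that $q_j(x):=|\f(\skp{z^{(j)}}{x})|=|\f(\eta_j^*\xi_j)|$ is a $\tau_1$-seminorm. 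Applying the elementary inequality $\sqrt{\sum_{j=1}^n a_j^2}\le\sum_{j=1}^n a_j$ (for $a_j\ge0$) to $a_j=q_j(x)$ yields $p_{\f,y}(x)\le\sum_{j=1}^n q_j(x)$ on $\mathcal A^n$. Thus every restricted $\tau$-seminorm is dominated by a finite sum of $\tau_1$-seminorms, and the standard comparison criterion for families of seminorms gives $\tau|_{\mathcal A^n}\subset\tau_1|_{\mathcal A^n}$, finishing part a).

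Part b) I would settle by the same truncation observation. For $x=(\xi_1,\dots,\xi_n)\in\mathcal A^n$ the image $i(x)=(\xi_1,\dots,\xi_n,0,\dots)$ again annihilates all indices $j>n$, whence any $\tau$-seminorm pulls back as $p_{\f,y}(i(x))=\sqrt{\sum_{j=1}^n|\f(\eta_j^*\xi_j)|^2}$, which is precisely the restriction of $p_{\f,y}$ to $\mathcal A^n$, i.e.\ a generating seminorm of $\tau|_{\mathcal A^n}$. Hence $p_{\f,y}\circ i$ is continuous for $\tau|_{\mathcal A^n}$, and continuity of $i$ follows at once.

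I do not expect a genuine analytic obstacle here; the only point requiring care is the bookkeeping around the normalization (\ref{admissible_sequence}). When one passes to the first $n$ coordinates of an admissible $y$ the constraint may weaken to $\sup_{1\le j\le n}\f(\eta_j^*\eta_j)\le1$ rather than equal $1$, but this merely rescales the seminorm by a positive factor $\le1$, which is irrelevant for comparing locally convex topologies. Once this is noted, both parts rest on the single elementary fact that $i$ and the tail-killing truncation leave only the first $n$ coordinates, so no real difficulty remains.
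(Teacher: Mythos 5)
Your proposal is correct and follows essentially the same route as the paper: the reverse inclusion in part a) is obtained by writing each coordinate functional as $|\f(\left<y_j,x\right>)|$ with $y_j=(0,\dots,0,\eta_j,0,\dots,0)$ and bounding $\sqrt{\sum_{j=1}^n a_j^2}\le\sum_{j=1}^n a_j$, and part b) follows by pulling back each seminorm through $i$, exactly as in the paper. Your side remark about the normalization (\ref{admissible_sequence}) possibly weakening to $\le1$ under truncation is a point the paper leaves implicit, but it does not change the argument.
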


\begin{proof} a) We already have $\tau_1\subseteq\tau$. Let us prove the converse. An arbitrary seminorm of
the form (\ref{seminorms}) restricted to $\mathcal A^n$ has the form
\begin{equation}\label{seminorms_reduced}
p_{\f,y}(x)=\sqrt{\sum_{j=1}^n|\f(\eta_j^*\xi_j)|^2}.
\end{equation}
Consider the vectors $y_j=(0,\dots,0,\eta_j,0,\dots,0)$, where $\eta_j$ is the $j$-th entry. Then
$$p_{\f,y}(x)=\sqrt{\sum_{j=1}^n|\f(\left<y_j,x\right>)|^2}\le\sum_{j=1}^n|\f(\left<y_j,x\right>)|,$$
from which we conclude that $p_{\f,y}$ is continuous with respect to $\tau_1$;

b) One can easily check that
$$i^{-1}(\{x\;|\;p_{\f,\eta_1,\dots,\eta_n,\dots}(x)<\e\})=\{(\xi_1,\dots,\xi_n)\;|\;p_{\f,\eta_1,\dots,\eta_n}(\xi_1,\dots,\xi_n)<\e\}.$$
\end{proof}

\begin{proposition}\label{FiniteBallCompact}
The unit ball in $\mathcal A^n$ is compact with respect to $\tau|_{\mathcal A^n}$. Since $\mathcal A^n$ is
self-dual, the unit ball is also complete and hence totally bounded.
\end{proposition}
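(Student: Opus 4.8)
The plan is to use the identification $\tau|_{\mathcal A^n}=\tau_1|_{\mathcal A^n}$ established in the previous proposition, to recognize this common topology as the product of $n$ copies of the weak-$*$ topology on $\mathcal A$, and then to exhibit the unit ball as a closed subset of a Banach--Alaoglu compact set. First I would unwind the generating functionals: for $x=(\xi_1,\dots,\xi_n)$ and $y=(\eta_1,\dots,\eta_n)$ one has $\f(\left<y,x\right>)=\sum_{j=1}^n\f(\eta_j^*\xi_j)$, so taking $y$ supported in a single coordinate $k$ with entry $1$ gives the functional $x\mapsto\f(\xi_k)$. Letting $\f$ range over all normal states recovers, coordinate by coordinate, precisely the seminorms generating the weak-$*$ topology of $\mathcal A$, since the normal states span the predual $\mathcal A_*$. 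Thus $\tau|_{\mathcal A^n}$ is the product of $n$ copies of the weak-$*$ topology. Moreover, if $\|x\|\le1$, that is $\sum_{j=1}^n\xi_j^*\xi_j\le1$, then $\xi_k^*\xi_k\le\sum_{j=1}^n\xi_j^*\xi_j\le1$, hence $\|\xi_k\|\le1$ for each $k$; so the unit ball of $\mathcal A^n$ embeds into the product of $n$ copies of the unit ball of $\mathcal A$, which is compact in $\tau|_{\mathcal A^n}$ by Banach--Alaoglu and Tychonoff.

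It remains to verify that the unit ball is closed in this product topology, and this is the one step that is not automatic, since multiplication is not jointly weak-$*$ continuous and so $\xi\mapsto\xi^*\xi$ need not carry weak-$*$ limits to weak-$*$ limits. I would circumvent this by lower semicontinuity. Fix a normal state $\omega$ with GNS data $(\pi_\omega,H_\omega,\Omega)$; normality of $\omega$ makes $\pi_\omega$ ultraweakly continuous, whence $\xi\mapsto\pi_\omega(\xi)\Omega$ is continuous from the weak-$*$ topology of $\mathcal A$ into the weak topology of $H_\omega$. As the Hilbert norm is weakly lower semicontinuous, the map $\xi\mapsto\omega(\xi^*\xi)=\|\pi_\omega(\xi)\Omega\|^2$ is weak-$*$ lower semicontinuous. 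Now let $x^\a=(\xi_1^\a,\dots,\xi_n^\a)$ be a net in the unit ball converging to $x=(\xi_1,\dots,\xi_n)$ coordinatewise in the weak-$*$ topology. Applying lower semicontinuity in each coordinate together with superadditivity of $\liminf$ gives
\begin{equation*}
\sum_{j=1}^n\omega(\xi_j^*\xi_j)\le\liminf_\a\sum_{j=1}^n\omega(\xi_j^{\a*}\xi_j^\a)=\liminf_\a\omega\left(\sum_{j=1}^n\xi_j^{\a*}\xi_j^\a\right)\le1.
\end{equation*}
Since a positive element $b$ satisfies $b\le1$ exactly when $\omega(b)\le1$ for every normal state $\omega$, we obtain $\sum_{j=1}^n\xi_j^*\xi_j\le1$, i.e.\ $x$ lies in the unit ball. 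Hence the unit ball is closed, and being a closed subset of a compact set, it is compact.

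With compactness in hand the remaining assertions are immediate. Being compact, the unit ball is totally bounded by the general fact recalled in the Preliminaries. Completeness comes separately from self-duality: since $\mathcal A^n$ is self-dual, Frank's theorem mentioned above guarantees that its unit ball is complete in $\tau_1$, and therefore in $\tau$, the two topologies agreeing on $\mathcal A^n$. The main obstacle, as indicated, is the closedness of the unit ball; everything else is either a direct computation or an appeal to Banach--Alaoglu, Tychonoff, and the cited results.
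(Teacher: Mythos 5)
Your proof is correct, and its skeleton --- identify $\tau|_{\mathcal A^n}$ with the product of $n$ weak-$*$ topologies, place the unit ball of $\mathcal A^n$ inside the Tychonoff product of Banach--Alaoglu-compact balls, and then prove that it is closed there --- coincides with the paper's. Where you genuinely diverge is in the closedness step, which both you and the authors correctly single out as the only nontrivial point. The paper argues by contraposition with a quantitative estimate: for $z$ with $\|z\|>1$ it picks a normal state $\f$ nearly attaining the norm of $\left<z,z\right>$ and shows, via the Cauchy--Schwarz inequality for the semi-inner product $\f(\left<\cdot,\cdot\right>)$, that a small $p_{\f,z}$-ball around $z$ misses the unit ball; this exhibits the complement as $\tau$-open and stays within the elementary toolkit of Proposition \ref{ComparisonTopologies}. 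You instead prove that $\xi\mapsto\omega(\xi^*\xi)$ is weak-$*$ lower semicontinuous via GNS and weak lower semicontinuity of the Hilbert norm, and then detect $\sum_j\xi_j^*\xi_j\le1$ by normal states. Your argument is softer and isolates the conceptual reason the ball is closed (lower semicontinuity of the quadratic form under a topology for which $\xi\mapsto\xi^*\xi$ is not continuous), and it transfers verbatim to balls of any radius; the price is the GNS machinery, which the paper avoids. Two small remarks. First, your assertion that $\tau|_{\mathcal A^n}$ \emph{equals} the product topology requires the reverse inclusion as well, namely that each generating functional $x\mapsto|\f(\left<y,x\right>)|$ is product-continuous; this is immediate from $|\f(\left<y,x\right>)|\le\sum_{j=1}^n|\f(\eta_j^*\xi_j)|$ with each $\f(\eta_j^*\cdot)\in\mathcal A_*$, but you only exhibit the coordinate functionals among the generators, which gives one inclusion (the paper sidesteps this by proving only that $\tau$ is weaker than the product topology and then invoking compactness plus Hausdorffness to get equality on the product of balls). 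Second, compactness alone already yields completeness and total boundedness of the ball in a uniform space, so the appeal to Frank's theorem, while valid, is not needed.
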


\begin{proof} In the case $n=1$, both topologies $\tau$ and $\tau_1$ are generated by seminorms
$\xi\mapsto|\f(\eta^*\xi)|$, $\eta\in\mathcal A$, $\f$ normal state. It is easy to verify that these
topologies are exactly the weak-$*$ topology on $\mathcal A$. Therefore, in this special case the conclusion
follows by Banach-Alaoglu theorem.

To obtain the result in general case, consider the product topology on $\mathcal A^n=\mathcal
A\times\dots\times\mathcal A$. Basic neighborhoods of zero have the form
$\{(\xi_1,\xi_2,\dots,\xi_n)\;|\;\forall j=1,2,\dots,n\:|\f_j(\eta_j^*\xi_j)|<\e_j\}$. Due to the
inequalities
$$\max_{1\le j\le n}|\f(\eta_j^*\xi_j)|\le\sqrt{\sum_{j=1}^n|\f(\eta_j^*\xi_j)|^2}\le\sqrt n\max_{1\le j\le
n}|\f(\eta_j^*\xi_j)|,$$
the topology $\tau$ is weaker then the product topology. Since the product of unit balls is compact in
stronger, product topology, and since $\tau$ is Hausdorff, we conclude that $\tau$ coincides with the product
topology on the product of unit balls.

Therefore, it remains to show that the unit ball in $\mathcal A^n$ is closed in the product of $n$ unit balls
in $\mathcal A$, i.e.\ that its complement is open.

Let $z=(\zeta_1,\dots,\zeta_n)\in\mathcal A^n$, $||z||>1$ be arbitrary. Let $\e>0$ be a number less then
$(||z||^2-||z||)/\sqrt n$, and let $\f$ be the normal state that attains its norm at
$\left<z,z\right>=\zeta_1^*\zeta_1+\dots+\zeta_n^*\zeta_n$ up to $\e\sqrt n$, i.e.\
$\f(\left<z,z\right>)>||z||^2-\e\sqrt n$. Consider the seminorm
$$p_{\f,z}(x)=\sqrt{|\f(\zeta_1^*\xi_1)|^2+\dots+|\f(\zeta_n^*\xi_n)|^2},\qquad x=(\xi_1,\xi_2,\dots,\xi_n).$$
We claim that the open set
$$G=\{x\;|\;p_{\f,z}(x-z)<\e\}$$
does not intersect the unit ball $B$.

Indeed, let $x\in G$. Then by classic Cauchy-Schwartz inequality we  have
\begin{align*}\e^2&>p_{\f,z}(x-z)^2=|\f(\zeta_1^*\xi_1)-\f(\zeta_1^*\zeta_1)|^2+\dots+|\f(\zeta_n^*\xi_n)-\f(\zeta_n^*\zeta_n)|^2\ge\\
                &\ge\frac1n|\f(\zeta_1^*\xi_1)+\dots+\f(\zeta_n^*\xi_n)-\f(\zeta_1^*\zeta_1)-\dots-\f(\zeta_n^*\zeta_n)|^2,
\end{align*}
or
\begin{align*}
\e\sqrt n&>|\f(\zeta_1^*\xi_1+\dots+\zeta_n^*\xi_n)-||z||^2+\e\sqrt n|\ge\\
        &\ge||z||^2-\e\sqrt n-|\f(\zeta_1^*\xi_1+\dots+\zeta_n^*\xi_n)|,
\end{align*}
i.e.
\begin{equation}\label{KSh1}
||z||^2-2\e\sqrt n<|\f(\zeta_1^*\xi_1+\dots+\zeta_n^*\xi_n)|=|\f(\left<z,x\right>)|.
\end{equation}
However, $\f(\left<z,x\right>)$ is a semi inner product and it satisfy Cauchy Schwartz inequality
\begin{equation}\label{KSh2}
|\f(\left<z,x\right>)|^2\le\f(\left<z,z\right>)\f(\left<x,x\right>)\le||z||^2||x||^2.
\end{equation}
From (\ref{KSh1}) and (\ref{KSh2}) we obtain
$$||z||\,||x||>||z||^2-2\e\sqrt n,$$
and taking into account how $\e$ is chosen, we have
$$||x||>\frac1{||z||}(||z||^2-2\e\sqrt n)>1.$$
Therefore, $x\notin B$, implying $B$ is a closed set. The proof is complete.
\end{proof}

\begin{proposition}The unit ball in $l^2(\mathcal A)$ is not totally bounded in $\tau$.
\end{proposition}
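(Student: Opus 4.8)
The plan is to exhibit an infinite subset of the unit ball whose points are uniformly separated in a \emph{single} seminorm of the form (\ref{seminorms}); such a set cannot be covered by finitely many balls of small radius, so the unit ball fails to be totally bounded.

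Concretely, I would take the standard basis vectors $e_k=(0,\dots,0,1,0,\dots)$, with the unit of $\mathcal A$ in the $k$-th slot, each of which lies in the unit ball because $\skp{e_k}{e_k}=1$. For the seminorm I would choose an arbitrary normal state $\f$ together with the constant sequence $y=(1,1,1,\dots)$. This $y$ is an admissible weight in the sense of (\ref{admissible_sequence}): it need not belong to $l^2(\mathcal A)$, it need only be a sequence in $\mathcal A$ with $\sup_{j\ge1}\f(\eta_j^*\eta_j)=1$, and here $\f(\eta_j^*\eta_j)=\f(1)=1$ for every $j$. The point worth noting is that this degree of freedom, namely allowing $y\notin l^2(\mathcal A)$, is precisely what distinguishes $\tau$ from $\tau_1$ and makes the obstruction possible.

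Next I would compute $p_{\f,y}(e_k-e_l)$ for $k\neq l$. The vector $e_k-e_l$ has the unit in position $k$ and $-1$ in position $l$, and all other entries zero, so only those two terms survive in (\ref{seminorms}). Hence $p_{\f,y}(e_k-e_l)^2=|\f(1)|^2+|-\f(1)|^2=2$, whence $p_{\f,y}(e_k-e_l)=\sqrt2$ independently of $k$ and $l$.

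Finally, fixing $\e=1/2$, any ball $\{x\;|\;p_{\f,y}(x-c)<\e\}$ can contain at most one of the $e_k$: if it contained two of them, the triangle inequality would give $\sqrt2=p_{\f,y}(e_k-e_l)\le p_{\f,y}(e_k-c)+p_{\f,y}(c-e_l)<2\e=1$, which is absurd. Since $\{e_k\}$ is infinite, no finite family of $\e$-balls in the seminorm $p_{\f,y}$ can cover it, and \emph{a fortiori} no finite family covers the unit ball. Therefore the unit ball is not totally bounded in $\tau$. There is no serious obstacle in this argument; the only step requiring a moment's care is recognizing that the constant sequence $(1,1,\dots)$ is a legitimate weight, even though it is not an element of $l^2(\mathcal A)$.
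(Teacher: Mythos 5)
Your proof is correct and follows essentially the same route as the paper: both use the standard basis vectors $e_k$ together with the seminorm $p_{\f,1,1,\dots}$ built from an arbitrary normal state and the constant weight $(1,1,\dots)$, compute $p(e_k-e_l)=\sqrt2$, and conclude that this uniformly separated infinite set obstructs total boundedness. Your added remark that the constant weight is admissible precisely because $y$ need not lie in $l^2(\mathcal A)$ is a nice observation, but the argument is the paper's own.
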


\begin{proof}Let $e_j=(0,\dots,0,1,0,\dots)$, where $1$ the unit of the algebra $\mathcal A$ stands at the
$j$-th entry. Let $\f$ be an arbitrary normal state and consider a seminorm $p=p_{\f,1,1,\dots}$ given by
$p(x)^2=\sum_{j=1}^{+\infty}|\f(\xi_j)|^2$.

We claim that the sequence $e_j$ is totally discrete in $p$. Indeed $p(e_i-e_j)^2=|\f(1)|^2+|\f(-1)|^2=2$,
i.e.\ $p(e_i-e_j)=\sqrt2$. Hence, the set $\{e_j\;|\;j\ge1\}$ is not totally bounded in $p$ and also in
$\tau$. The same is valid for a larger set - the unit ball.
\end{proof}

\section{"Compact" operators}

Let $y$, $z\in l^2(\mathcal A)$. The operator $l^2(\mathcal A)\to l^2(\mathcal A)$, $x\mapsto
z\left<y,x\right>$ is adjointable (its adjoint is $x\mapsto y\left<z,x\right>$) and bounded. The closed
linear hull of such operators is called the algebra of "compact" operators.

We say that the operator $T\in B^a(l^2(\mathcal A))$ is {\em compact} if its image of any (norm) bounded set
is a totally bounded set in topology $\tau$ described in the previous section. For the operator $T\in
B^a(l^2(\mathcal A))$ it is enough to maps the unit ball into a totally bounded set to be a compact operator.

\begin{remark} Totally bounded and relatively compact sets differ in general case (whenever the unit
ball is not complete). Also, throughout the literature, there is a certain ambiguity between terms {\em
completely continuous}, {\em compact} and {\em precompact} operators. Although it seems that terms {\em
completely continuous} and {\em precompact} are more accurate, we found that {\em compact} is more convenient
for our purpose.
\end{remark}

Before we prove that any "compact" operator is compact, we need a few lemmata.

\begin{lemma} For $S$, $T\subseteq l^2(\mathcal A)$ and a seminorm $p$ denote
$$d_p(S,T)=\sup_{x\in S}\inf_{y\in T}p(x-y)$$
(and note that $d_p$ is not symmetric). Let $S\subseteq l^2(\mathcal A)$. If for all seminorms $p$ of the
form (\ref{seminorms}) and all $\e>0$ there is a totally bounded set $S_{p,\e}$ such that
\begin{equation}\label{distance_seminorm}
d(S,S_{p,\e})<\e.
\end{equation}
then $S$ is also totally bounded.
\end{lemma}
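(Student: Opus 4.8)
The plan is to verify total boundedness of $S$ straight from the definition given in Section~2: for every seminorm $p$ of the form (\ref{seminorms}) and every $\delta>0$ I must exhibit finitely many centers $c_1,\dots,c_m$ whose $p$-balls $B_p(c_i;\delta)=\{w\mid p(w-c_i)<\delta\}$ cover $S$. The crucial structural remark is that total boundedness is tested one seminorm at a time, so it dovetails exactly with the hypothesis, which — for each fixed $p$ and $\e$ — hands me a totally bounded approximant $S_{p,\e}$ with (\ref{distance_seminorm}), where the distance in question is the $p$-dependent quantity $d_p(S,S_{p,\e})=\sup_{x\in S}\inf_{y\in S_{p,\e}}p(x-y)$.

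First I would fix $p$ and $\delta$ and set $\e=\delta/2$. Applying the hypothesis produces a totally bounded set $S_{p,\e}$ with $d_p(S,S_{p,\e})<\e$. Since $S_{p,\e}$ is totally bounded, it is in particular totally bounded with respect to the single seminorm $p$, so there exist centers $c_1,\dots,c_m$ with $S_{p,\e}\subseteq\bigcup_{i=1}^m B_p(c_i;\e)$.

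The second step combines the two approximations by the triangle inequality for the semimetric induced by $p$. Given any $x\in S$, the bound $d_p(S,S_{p,\e})<\e$ supplies a point $y\in S_{p,\e}$ with $p(x-y)<\e$, and the covering of $S_{p,\e}$ supplies an index $i$ with $p(y-c_i)<\e$. Hence $p(x-c_i)\le p(x-y)+p(y-c_i)<2\e=\delta$, that is $x\in B_p(c_i;\delta)$. As $x\in S$ was arbitrary, the finitely many balls $B_p(c_i;\delta)$ cover $S$; and since $p$ and $\delta$ were arbitrary, $S$ is totally bounded.

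I do not expect a genuine obstacle in this lemma; the argument is a two-step triangle estimate with the halving $\e=\delta/2$ absorbing the two independent sources of error. The only points requiring care are interpretive rather than technical: reading (\ref{distance_seminorm}) as the one-sided, seminorm-indexed distance $d_p$ (not a symmetric metric), and noting that the approximant $S_{p,\e}$ is permitted to depend on \emph{both} $p$ and $\e$. Once these are fixed, there is nothing deeper to overcome, and the fact that a totally bounded set yields a \emph{finite} $p$-net for each individual $p$ is exactly what makes the reduction work.
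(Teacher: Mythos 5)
Your proof is correct and follows essentially the same route as the paper's: both arguments fix a single seminorm $p$, extract a finite $p$-net for the totally bounded approximant $S_{p,\e}$ with $\e$ taken to be half the target radius, and then apply the triangle inequality for $p$ to convert it into a finite net for $S$. Your reading of (\ref{distance_seminorm}) as the one-sided, $p$-dependent quantity $d_p$ is also the intended one.
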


\begin{proof} Denote
$$B_p(x;\e)=\{y\in l^2(\mathcal A)\;|\;p(x-y)<\e\}.$$
The condition (\ref{distance_seminorm}) gives
\begin{equation}\label{cover_p}
S\subseteq\bigcup_{x\in S_{p,\e}}B_p(x;\e/2),
\end{equation}
for all $\e>0$.

Let $\e>0$ be arbitrary. The set $S_{p,\e/2}$ is totally bounded in $p$ and hence there is a finite set
$\{c_1,\dots,c_m\}$ such that the union of balls $B_p(c_j;\e/2)$ covers $S_{\e/2}$. By (\ref{cover_p}) the
union of balls $B_p(c_j;\e)$ covers $S$.
\end{proof}

\begin{lemma}\label{CompletelyLimit}
Let $T_\a:l^2(\mathcal A)\to l^2(\mathcal A)$ be a net of compact operators such that $T_\a x\to Tx$ in
$\tau$ uniformly with respect to $||x||<1$. Then $T$ is also compact.
\end{lemma}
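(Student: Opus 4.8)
The plan is to verify directly that $T$ maps the unit ball $B=\{x\in l^2(\mathcal A)\mid\|x\|\le1\}$ into a totally bounded set; by the remark following the definition of a compact operator, this suffices. To carry this out I would invoke the previous lemma with $S=T(B)$: it reduces the problem to exhibiting, for every seminorm $p$ of the form (\ref{seminorms}) and every $\e>0$, a totally bounded set $S_{p,\e}$ with $d_p(T(B),S_{p,\e})<\e$, where $d_p$ is the asymmetric distance defined there.

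The natural candidate is $S_{p,\e}=T_\a(B)$ for a single, suitably large index $\a$. Fix $p$ and $\e>0$. By the hypothesis that $T_\a x\to Tx$ in $\tau$ uniformly for $\|x\|<1$, applied to the seminorm $p$, there is an index $\a$ such that $p(T_\a x-Tx)<\e/2$ for all $x\in B$. Since each $T_\a$ is compact and $B$ is bounded, the set $T_\a(B)$ is totally bounded in $\tau$, hence in $p$; so $S_{p,\e}:=T_\a(B)$ is an admissible choice.

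It then remains to estimate $d_p(T(B),T_\a(B))=\sup_{x\in B}\inf_{w\in T_\a(B)}p(Tx-w)$. For each fixed $x\in B$ the particular element $w=T_\a x$ lies in $T_\a(B)$ and satisfies $p(Tx-T_\a x)<\e/2$, so the inner infimum is at most $\e/2$; taking the supremum over $x\in B$ gives $d_p(T(B),T_\a(B))\le\e/2<\e$. Thus the hypothesis of the previous lemma is met for every $p$ and every $\e$, whence $T(B)$ is totally bounded, i.e.\ $T$ is compact.

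The argument is short because the real work sits in the preceding lemma. The only point that genuinely uses the hypothesis — and the step I would watch most carefully — is the uniformity of the convergence: it is precisely uniformity over the unit ball that lets a single net element $T_\a$ approximate all of $T(B)$ simultaneously in $d_p$, producing one totally bounded set $S_{p,\e}$ rather than a different approximant for each point. Without it the pointwise matching $Tx\leftrightarrow T_\a x$ would not yield a uniform bound on $d_p$, and the reduction would fail.
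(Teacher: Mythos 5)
Your proof is correct and follows the same route as the paper: choose a single index $\a$ with $\sup_{\|x\|<1}p(Tx-T_\a x)$ small, take $S_{p,\e}=T_\a(B)$, bound $d_p(T(B),T_\a(B))$, and invoke the preceding lemma. Your write-up is slightly more detailed (the explicit $\e/2$ bookkeeping and the remark on where uniformity is used), but the argument is the same.
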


\begin{proof} For any $\e>0$ and any seminorm $p$ of the form (\ref{seminorms}) there is $\a$ such that
$\sup_{||x||<1}p(Tx-T_\a x)<\e$. Therefore,
$$d_p(T(B_{||\cdot||}(0;1)),T_\a(B_{||\cdot||}(0;1)))\le\e$$
and the conclusion follows from the previous Lemma.
\end{proof}

\begin{corollary}
Let $S\subseteq l^2(\mathcal A)$ be a set such that for all $\e>0$ there is a totally bounded (in $\tau$) set
$S_\e$ such that
\begin{equation}\label{distance}
d(S,S_\e)=\sup_{x\in S}\inf_{y\in S_\e}||x-y||<\e.
\end{equation}
Then $S$ is also totally bounded in $\tau$.

Also, let $T_n:l^2(\mathcal A)\to l^2(\mathcal A)$ be a sequence of compact operators that converges to $T$
in the operator norm. Then $T$ is also compact.
\end{corollary}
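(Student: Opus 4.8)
The plan is to deduce both assertions from the two preceding lemmata, the bridge between the norm and the topology $\tau$ being the inequality (\ref{tau<tau_2}): every seminorm $p$ of the form (\ref{seminorms}) satisfies $p(x)\le\f(\left<x,x\right>)^{1/2}\le\|x\|$. In words, the norm dominates all the seminorms generating $\tau$, uniformly and with constant one. This single observation is what converts each norm estimate appearing in the statement into a $\tau$-estimate.

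For the first assertion I would verify the hypotheses of the first lemma of this section. Fix a seminorm $p$ of the form (\ref{seminorms}) and $\e>0$, and simply take $S_{p,\e}=S_\e$. From $p(\cdot)\le\|\cdot\|$ it follows that $\inf_{y\in S_\e}p(x-y)\le\inf_{y\in S_\e}\|x-y\|$ for every $x\in S$, whence $d_p(S,S_\e)\le d(S,S_\e)<\e$. Moreover $S_\e$ is totally bounded in $\tau$, and since every ball $B_p(c;\delta)$ is $\tau$-open, total boundedness in $\tau$ entails total boundedness with respect to the single seminorm $p$. Thus $S_{p,\e}$ meets the requirements of the first lemma for every $p$ and every $\e$, and that lemma yields that $S$ is totally bounded.

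For the second assertion I would reduce norm convergence to uniform $\tau$-convergence on the unit ball and invoke Lemma \ref{CompletelyLimit}. For each seminorm $p$ of the form (\ref{seminorms}) and each $x$ with $\|x\|<1$ we have, again by (\ref{tau<tau_2}), $p(T_nx-Tx)\le\|T_nx-Tx\|\le\|T_n-T\|$, so that $\sup_{\|x\|<1}p(T_nx-Tx)\le\|T_n-T\|\to0$. Hence $T_nx\to Tx$ in $\tau$ uniformly for $\|x\|<1$, and since a sequence is in particular a net, Lemma \ref{CompletelyLimit} gives that $T$ is compact. Equivalently, one may apply the first assertion to $S=T(B_{\|\cdot\|}(0;1))$ with $S_\e=T_n(B_{\|\cdot\|}(0;1))$ for $n$ chosen so that $\|T_n-T\|<\e$, using the estimate $d(T(B_{\|\cdot\|}(0;1)),T_n(B_{\|\cdot\|}(0;1)))\le\|T_n-T\|$ and the compactness of each $T_n$.

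I do not expect any genuine difficulty here; the only point requiring a moment's attention is the passage from total boundedness in $\tau$ to total boundedness in an individual seminorm $p$, which is immediate since the $p$-balls belong to the defining system of $\tau$-neighborhoods.
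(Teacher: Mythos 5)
Your proof is correct and follows essentially the same route as the paper, which simply observes that $\tau$ is coarser than the norm topology (via the inequality (\ref{tau<tau_2})) and lets both conclusions follow from the two preceding lemmata; you have merely written out the details the paper leaves implicit.
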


\begin{proof} Both conclusions follows from the fact that $\tau$ is coarser then the norm topology.
\end{proof}

\begin{lemma}\label{CompletelyLinear}
Let $T_1$ and $T_2$ be compact operators, and let $u_1$, $u_2\in\mathcal A$. Then $T_1u_1+T_2u_2$ is also
compact.
\end{lemma}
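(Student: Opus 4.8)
The plan is to reduce the claim to two elementary permanence properties of totally bounded sets — stability under images of bounded operators and under finite algebraic sums — while exploiting that the coefficients $u_i\in\mathcal A$ act as norm-bounded operators. By the reduction to the unit ball noted in the definition of compactness, it is enough to prove that $T_1u_1+T_2u_2$ sends the unit ball $B=\{x\;|\;||x||\le1\}$ into a $\tau$-totally bounded set.

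First I would make explicit how $u_i$ acts: as the diagonal left multiplication $u_i\cdot x=(u_i\xi_1,u_i\xi_2,\dots)$, which is the canonical adjointable embedding $\mathcal A\hookrightarrow B^a(l^2(\mathcal A))$ (its adjoint being left multiplication by $u_i^*$) and which satisfies $||u_i\cdot x||\le||u_i||\,||x||$. Thus $T_iu_i$ is the composition $x\mapsto T_i(u_i\cdot x)$, and $u_i\cdot B$ is a norm-bounded set of radius $||u_i||$.

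Next I would invoke compactness of each $T_i$: since a compact operator maps every norm-bounded set into a $\tau$-totally bounded set, both $T_1(u_1\cdot B)$ and $T_2(u_2\cdot B)$ are totally bounded. For $x\in B$ we then have
\[
(T_1u_1+T_2u_2)(x)=T_1(u_1\cdot x)+T_2(u_2\cdot x)\in T_1(u_1\cdot B)+T_2(u_2\cdot B),
\]
so the image of $B$ lies inside the algebraic sum of two totally bounded sets. It remains to check that such a sum $S_1+S_2$ is totally bounded: for a seminorm $p$ of the form (\ref{seminorms}) and $\e>0$, cover $S_1$ by finitely many balls $B_p(c_k;\e/2)$ and $S_2$ by finitely many balls $B_p(d_l;\e/2)$; then the finitely many balls $B_p(c_k+d_l;\e)$ cover $S_1+S_2$, because $p((c_k+d_l)-(s_1+s_2))\le p(c_k-s_1)+p(d_l-s_2)<\e$ whenever $s_1\in B_p(c_k;\e/2)$ and $s_2\in B_p(d_l;\e/2)$. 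Since a subset of a totally bounded set is totally bounded, $(T_1u_1+T_2u_2)(B)$ is totally bounded and the operator is compact.

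I do not expect a genuine obstacle; the only point that needs care is the identification of the $\mathcal A$-coefficients with the norm-bounded diagonal operators $u_i\cdot(\;)$ — note that right multiplication by $u_i$ would fail to be adjointable, so it is precisely this left action that keeps $u_i\cdot B$ bounded and lets the compactness of $T_i$ apply. Everything else reduces to the triangle inequality for each individual seminorm (\ref{seminorms}), which is exactly the notion of total boundedness used throughout the paper.
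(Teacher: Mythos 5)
Your argument is internally sound, but it proves a different statement from the one the paper intends. You read $T_iu_i$ as the composition $T_i\circ L_{u_i}$, where $L_{u_i}$ is the diagonal left action $u_i\cdot x=(u_i\xi_1,u_i\xi_2,\dots)$ applied to the \emph{input}. The paper's own proof shows that $T_iu_i$ means $x\mapsto (T_ix)u_i$, i.e.\ right module multiplication of the \emph{output} by $u_i$: its candidate net is $\{c_iu_1+d_ju_2\}$ with $c_i,d_j$ approximating $T_1x,T_2x$, and the key estimate is $||(T_1xu_1+T_2xu_2)-(c_iu_1+d_ju_2)||\le||T_1x-c_i||\,||u_1||+||T_2x-d_j||\,||u_2||$. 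These are genuinely different operators, and your reduction does not cover the paper's version. There the set to be controlled is contained in $T_1(B)u_1+T_2(B)u_2$, so after invoking compactness of the $T_i$ one still must know that right multiplication by $u$ carries a $\tau$-totally bounded set to a $\tau$-totally bounded set. That is not one of your two permanence properties, and it is not automatic: for a fixed seminorm the estimate $p_{\f,y}(wu)\le ||u||\,p_{\f,y}(w)$ fails in general (take $\mathcal A=M_2(\C)$, $\f(a)=a_{11}$, $\eta_1=1$, $w=(e_{12},0,\dots)$, $u=e_{21}$: then $p(w)=0$ but $p(wu)=1$). One has to argue differently -- either with norm $\e$-nets transported by the norm estimate $||su-s'u||\le||s-s'||\,||u||$ as the paper does, or by showing that $p_{\f,y}(\,\cdot\,u)$ is dominated by other seminorms of the family (e.g.\ by polarization, expressing $\f(\eta^*\omega u)$ through the states $a\mapsto\f((u+i^k)^*a(u+i^k))$). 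Your closing remark that "right multiplication would fail to be adjointable" identifies the wrong obstruction: adjointability is irrelevant to the purely topological notion of compactness used here, and right multiplication certainly preserves norm bounds; the real issue is its interaction with the seminorms (\ref{seminorms}).

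Two mitigating points. First, the statement you did prove is true, and your proof of it is correct and clean: $u_i\cdot B$ is norm bounded, compactness of $T_i$ applies, and your $\e/2$-net argument for the sum of two totally bounded sets is exactly the elementary step the paper also uses (with nets $c_i$, $d_j$ and centers $c_i u_1+d_j u_2$). Second, in the only place the lemma is applied -- the proof that $\Theta_{y,z}$ is compact, where one sums the operators $\Theta_{y,e_j\zeta_j}$ -- it is used with $u_1=u_2=1$, where the two readings coincide, so your version would in fact suffice for the main theorem. Nevertheless, as a proof of the lemma as the authors state and use it, substituting the left action on the argument for the right action on the value is a change of statement, not a proof of this one.
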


\begin{proof} Let $\e>0$ be arbitrary. Since $T_1$ and $T_2$ are compact there is a finite
$\e/2||u_1||$ net for $T_1(B_{||\cdot||}(0;1))$, say $c_1$, $c_2$, $\dots$, $c_n$, and a finite $\e/2||u_2||$
net for $T_2(B_{||\cdot||}(0;1))$, say $d_1$, $\dots$, $d_m$. Then the set $\{c_iu_1+d_ju_2\;|\;1\le i\le
i,1\le j\le m\}$ is a finite $\e$ net for $(T_1u_1+T_2u_2)(B_{||\cdot||}(0;1))$. Indeed, if $x\in
B_{||\cdot||}(0;1)$, then there is $i$ and $j$ such that $||T_1x-c_i||<\e/2||u_1||$ and
$||T_2x-d_j||<\e/2||u_2||$. Hence
$$||(T_1xu_1+T_2xu_2)-(c_iu_1+d_ju_2)||\le||T_1x-c_i||\,||u_1||+||T_2x-d_j||\,||u_2||<\e.$$
\end{proof}

\begin{theorem} Let $T:l^2(\mathcal A)\to l^2(\mathcal A)$ be a "compact" operator. Then $T$ is compact.
\end{theorem}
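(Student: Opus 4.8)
The plan is to reduce the general case to the finite-rank case using the machinery built in the preceding lemmata. By definition, a "compact" operator $T$ lies in the norm closure of the finite rank operators. The Corollary already tells us that a norm limit of compact operators is compact, so it suffices to prove that every finite rank operator is compact. A finite rank operator is a finite linear combination of the elementary operators $\Theta_{y,z}(x)=z\left<y,x\right>$, and Lemma \ref{CompletelyLinear} shows that the class of compact operators is closed under the formation of such combinations (with coefficients in $\mathcal A$, which more than covers scalar combinations). Hence the entire problem collapses to a single claim: each elementary operator $\Theta_{y,z}$ is compact.

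So the core of the proof is to show that $x\mapsto z\left<y,x\right>$ maps the unit ball into a totally bounded set in $\tau$. First I would analyze the image: as $x$ ranges over the unit ball, $\left<y,x\right>$ ranges over a subset of $\mathcal A$, and I would bound it using the Cauchy--Schwarz inequality for the $\mathcal A$-valued inner product, giving $\left<y,x\right>^*\left<y,x\right>\le\|x\|^2\left<y,y\right>$, so that $\left<y,x\right>$ stays in a bounded subset of $\mathcal A$, say of norm at most $\|y\|$. Thus the image of the unit ball under $\Theta_{y,z}$ is contained in $\{z\,a\;|\;a\in\mathcal A,\ \|a\|\le\|y\|\}$, a set of the very specific form ``fixed vector times a bounded element of the algebra.'' The task is then to show such a set is totally bounded in $\tau$.

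To establish total boundedness of $\{z\,a : \|a\|\le\|y\|\}$, I would exploit the approximation principle encoded in the Corollary: it is enough to find, for each $\e>0$, a totally bounded set lying within norm-distance $\e$ of our image set. The natural candidate is obtained by truncating $z$. Writing $z=(\zeta_1,\zeta_2,\dots)$, I would set $z^{(n)}=(\zeta_1,\dots,\zeta_n,0,0,\dots)\in\mathcal A^n$ and observe $\|z-z^{(n)}\|\to0$, hence $\|z\,a-z^{(n)}a\|\le\|z-z^{(n)}\|\,\|y\|$ uniformly over the relevant $a$. The truncated image $\{z^{(n)}a : \|a\|\le\|y\|\}$ lives inside the submodule $\mathcal A^n$, and it is a bounded subset there. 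By Proposition \ref{FiniteBallCompact} the unit ball of $\mathcal A^n$ is totally bounded in $\tau|_{\mathcal A^n}$, so any bounded subset of $\mathcal A^n$ is totally bounded in $\tau|_{\mathcal A^n}$; combined with part (b) of the preceding proposition (continuity of the embedding $\mathcal A^n\hookrightarrow l^2(\mathcal A)$) the truncated image is totally bounded in $\tau$.

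The main obstacle, and the place requiring care, is the transition from total boundedness inside the finite-dimensional piece $\mathcal A^n$ to total boundedness in the ambient module $l^2(\mathcal A)$ in the full topology $\tau$, together with the uniformity of the truncation estimate. The continuity of the embedding $i$ from the earlier proposition is exactly what guarantees that a $\tau|_{\mathcal A^n}$-totally bounded set maps to a $\tau$-totally bounded set, so I would invoke it explicitly; and because the norm estimate $\|z\,a-z^{(n)}a\|\le\|z-z^{(n)}\|\,\|y\|$ is uniform in $a$, the hypothesis $d(S,S_\e)<\e$ of the Corollary is met with $S$ the image of the unit ball and $S_\e$ the truncated image for $n$ large. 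Assembling these pieces yields that $\Theta_{y,z}$ is compact, and then the reductions described above finish the theorem.
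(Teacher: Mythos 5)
Your proposal is correct and follows essentially the same route as the paper: reduce to the elementary operators $\Theta_{y,z}$ via the norm-limit Corollary and Lemma \ref{CompletelyLinear}, then use the truncation $z^{(n)}=(\zeta_1,\dots,\zeta_n,0,\dots)$ together with Proposition \ref{FiniteBallCompact} and the uniform estimate $\|za-z^{(n)}a\|\le\|z-z^{(n)}\|\,\|y\|$. The only cosmetic difference is that the paper decomposes the operator itself as the norm limit of $\sum_{j=1}^{n}\Theta_{y,e_j\zeta_j}$ (each with range in $\mathcal A^1$), whereas you truncate the image set directly into $\mathcal A^n$; both arguments rest on exactly the same ingredients.
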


\begin{proof} In view od Lemmata \ref{CompletelyLimit} and \ref{CompletelyLinear}, it is enough to prove that
operators of the form $x\mapsto \Theta_{y,z}(x)=z\left<y,x\right>$ are compact.

In the special case, where $z=e_j\zeta$, $\zeta\in\mathcal A$ it immediately follows from Proposition
\ref{FiniteBallCompact}. Indeed, then $\Theta_{y,e_j\zeta}(B_{||\cdot||}(0;1))$ is contained in the ball of
radius $||T||$ in $\mathcal A^1$ which is totally bounded.

In general case, let $z=(\zeta_1,\zeta_2,\dots)$. Then $z=\sum_{j=1}^{+\infty}e_j\zeta_j$ where the series
converges in the norm. Since $||\Theta_{y,z}-\Theta_{y,z'}||\le||y||\,||z-z'||$, we have
$$\Theta_{y,z}=\lim_{n\to+\infty}\sum_{j=1}^n\Theta_{y,e_j\zeta_j}$$
and the required follows from the special case and Lemmata \ref{CompletelyLimit} and \ref{CompletelyLinear}.
\end{proof}

The converse is true in the special case where $\mathcal A=B(H)$ is the full algebra of all bounded linear
operators on a Hilbert space $H$. Before we prove such result we need a technical Lemma.

\begin{lemma}\label{ChooseState}
Let $\mathcal A=B(H)$ and let $a_j\in\mathcal A$, $j\ge 1$ be positive elements with $||a_j||>\delta$. There
is a normal state $\f$ and unitary elements $u_j$, $v_j\in \mathcal A$ such that $|\f(v_j^*a_ju_j)|>\delta$.
\end{lemma}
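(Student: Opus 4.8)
The plan is to exploit the fact that the unitaries $u_j,v_j$ are allowed to depend on $j$, so that a single, very simple normal state will do the job; one does not need an elaborate density operator at all. Recall that every \emph{vector state} $\f(a)=\skp{e}{ae}$ (with $e\in H$ a fixed unit vector) is normal. Although $e$ is fixed once and for all, the unitaries let us redirect it towards whatever unit vector is convenient for the particular $a_j$ under consideration, and this per-index freedom is exactly what absorbs the variation among the $a_j$.

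First I would fix a unit vector $e\in H$ and put $\f(a)=\skp{e}{ae}$. For each $j$, since $a_j\ge0$ we have $\|a_j\|=\sup_{\|\xi\|=1}\skp{\xi}{a_j\xi}>\delta$, so I can pick a unit vector $\xi_j$ with $\skp{\xi_j}{a_j\xi_j}>\delta$; by Cauchy--Schwarz $\|a_j\xi_j\|\ge\skp{\xi_j}{a_j\xi_j}>\delta$, and in particular $a_j\xi_j\ne0$. Next I would choose $u_j$ to be any unitary with $u_je=\xi_j$ and $v_j$ to be any unitary with $v_je=a_j\xi_j/\|a_j\xi_j\|$; such unitaries exist because the assignment of one unit vector to another always extends to a unitary by matching orthonormal bases of $H$. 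A direct computation then gives
\begin{equation*}
\f(v_j^*a_ju_j)=\skp{e}{v_j^*a_ju_je}=\skp{v_je}{a_j\xi_j}=\frac{\skp{a_j\xi_j}{a_j\xi_j}}{\|a_j\xi_j\|}=\|a_j\xi_j\|>\delta,
\end{equation*}
so that $|\f(v_j^*a_ju_j)|>\delta$ holds for every $j$ with the same $\f$, as required.

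The one point that requires care --- and what I regard as the genuine (if small) obstacle --- is the phase alignment hidden in the choice of $v_j$. The quantity $\skp{v_je}{a_j\xi_j}$ must be made \emph{real} and equal to $\|a_j\xi_j\|$, which is why $v_j$ is taken to send $e$ precisely to the normalized vector $a_j\xi_j/\|a_j\xi_j\|$. Abstractly this is nothing but the polar-decomposition trick applied to the rank-one operator $x\mapsto\skp{e}{x}\,a_j\xi_j$, whose trace norm equals $\|a_j\xi_j\|$ and is attained against the unitary $v_j$. Once one sees that a single vector state suffices, the rest is immediate; the positivity of $a_j$ is used only to locate $\xi_j$, and even that could be replaced by the weaker statement $\|a_j\|>\delta$.
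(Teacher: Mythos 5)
Your proposal is correct and follows essentially the same route as the paper: fix a single vector state and use the per-index unitaries to transport the fixed vector to a unit vector where $a_j$ is large. The only (harmless) variation is that the paper takes $u_j=v_j$ with $u_j\psi=h_j$ and gets $\f(u_j^*a_ju_j)=\left<a_jh_j,h_j\right>>\delta$ directly, whereas you align $v_j$ with the polar direction of $a_j\xi_j$ to obtain the slightly larger value $\|a_j\xi_j\|$; both choices work.
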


\begin{remark}Actually, we can choose $\f$ to be a vector state, and also we can choose $u_j=v_j$.
\end{remark}

\begin{proof} Let $\psi\in H$ be a unit vector, and let $\f$ be the corresponding vector state, i.e.\
$\f(a)=\left<a\psi,\psi\right>$. For all $a_j$ let $h_j$ be a unit vector such that
$\left<a_jh_j,h_j\right>>\delta$. As it is easy to see, there is a unitary $u_j$ such that $u_j\psi=h_j$.
Thus, we have $\f(u_j^*a_ju_j)=\left<u_j^*a_ju_j\psi,\psi\right>=\left<a_jh_j,h_j\right>>\delta$.
\end{proof}

\begin{theorem}\label{OtherDirection}
Let $\mathcal A=B(H)$ and let $T:l^2(\mathcal A)\to l^2(\mathcal A)$ be a compact operator. Then T is
"compact".
\end{theorem}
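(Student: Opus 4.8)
The plan is to reformulate ``compactness'' of $T$ as a decay condition on its truncations, and then to contradict total boundedness by producing, inside the image of the unit ball, an infinite family that stays uniformly separated in one cleverly chosen seminorm. Write $P_n$ for the projection onto the first $n$ coordinates, $P_n(\xi_1,\xi_2,\dots)=(\xi_1,\dots,\xi_n,0,\dots)$, let $P_{\{j\}}$ project onto the $j$-th coordinate, and let $B=B_{||\cdot||}(0;1)$. Since $P_{\{j\}}Tx=e_j\left<e_j,Tx\right>=e_j\left<T^*e_j,x\right>=\Theta_{T^*e_j,e_j}(x)$, each truncation $P_nT=\sum_{j=1}^n\Theta_{T^*e_j,e_j}$ is a finite rank operator, so $T$ is ``compact'' as soon as $||(I-P_n)T||\to0$. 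I would therefore argue by contradiction. As $(I-P_{n+1})=(I-P_{n+1})(I-P_n)$, the sequence $||(I-P_n)T||$ is non-increasing; if it does not tend to $0$ it is bounded below by some $\delta>0$ for every $n$.

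First I would extract ordered, pairwise disjoint coordinate blocks. Put $n_1=0$; having chosen $n_k$, pick $x_k\in B$ with $||(I-P_{n_k})y_k||>\delta$, where $y_k=Tx_k=(\eta_1^{(k)},\eta_2^{(k)},\dots)$. Since the norm tail of $y_k\in l^2(\mathcal A)$ vanishes, choose $m_k>n_k$ with $||(I-P_{m_k})y_k||<\delta/4$ and set $n_{k+1}=m_k$. Writing $I_k=\{n_k+1,\dots,m_k\}$, the block $b_k=(P_{m_k}-P_{n_k})y_k$ satisfies $||b_k||\ge 3\delta/4$, so the positive element $a_k=\left<b_k,b_k\right>=\sum_{j\in I_k}\eta_j^{(k)*}\eta_j^{(k)}$ has $||a_k||=||b_k||^2\ge 9\delta^2/16>\delta^2/2$.

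The decisive step, and the only one using $\mathcal A=B(H)$, is alignment. A single fixed vector state cannot be large on all the $a_k$ at once, so I would transfer the unitary freedom onto the inputs using the right $\mathcal A$-linearity of $T$: for a unitary $u\in\mathcal A$ one has $T(x_ku)=y_ku$, whose block operator is $u^*a_ku$. By Lemma \ref{ChooseState} (in the sharpened form of the following Remark, with $u_j=v_j$ and $\f$ the vector state $\f(a)=\left<a\psi,\psi\right>$) there are unitaries $u_k$ with $\f(u_k^*a_ku_k)>\delta^2/2$. Replacing $x_k$ by $\tilde x_k=x_ku_k\in B$ and $y_k$ by $\tilde y_k=y_ku_k$, with coordinates $\tilde\eta_j^{(k)}=\eta_j^{(k)}u_k$, the conjugated blocks $\tilde a_k=\sum_{j\in I_k}\tilde\eta_j^{(k)*}\tilde\eta_j^{(k)}=u_k^*a_ku_k$ now satisfy $\f(\tilde a_k)>\delta^2/2$. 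I then define the weight $w=(\omega_j)$ by $\omega_j\colon\zeta\mapsto\left<\zeta,\psi\right>\xi_j$ (rank one) for $j\in I_k$, where $\xi_j=\tilde\eta_j^{(k)}\psi/||\tilde\eta_j^{(k)}\psi||$ when this vector is nonzero and $\omega_j=0$ otherwise. Then $\omega_j^*\omega_j$ is $0$ or the projection onto $\psi$, so $\f(\omega_j^*\omega_j)\in\{0,1\}$, giving $\sup_j\f(\omega_j^*\omega_j)=1$, i.e.\ $w$ satisfies (\ref{admissible_sequence}); moreover $\sum_{j\in I_k}|\f(\omega_j^*\tilde\eta_j^{(k)})|^2=\sum_{j\in I_k}||\tilde\eta_j^{(k)}\psi||^2=\f(\tilde a_k)>\delta^2/2$.

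Finally I would show $\{\tilde y_k\}\subseteq T(B)$ is $c$-separated in the single seminorm $p_{\f,w}$. Fix $k<l$; restricting the defining sum to the block $I_l$ and using the triangle inequality for the $\ell^2$-norm of the scalar sequence $\bigl(\f(\omega_j^*\,\cdot\,)\bigr)_j$ gives
\[
p_{\f,w}(\tilde y_l-\tilde y_k)\ge\Big(\sum_{j\in I_l}|\f(\omega_j^*\tilde\eta_j^{(l)})|^2\Big)^{1/2}-\Big(\sum_{j\in I_l}|\f(\omega_j^*\tilde\eta_j^{(k)})|^2\Big)^{1/2}.
\]
The first term exceeds $\delta/\sqrt2$. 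For the second, Cauchy--Schwarz for the semi-inner product $\f(\,\cdot^*\,\cdot\,)$ together with $\f(\omega_j^*\omega_j)\le1$ bounds it by $\f\bigl(\sum_{j\in I_l}\tilde\eta_j^{(k)*}\tilde\eta_j^{(k)}\bigr)^{1/2}$; since $l>k$ every $j\in I_l$ exceeds $m_k$, so $\sum_{j\in I_l}\tilde\eta_j^{(k)*}\tilde\eta_j^{(k)}\le u_k^*\left<(I-P_{m_k})y_k,(I-P_{m_k})y_k\right>u_k$ has norm $<\delta^2/16$, whence the second term is $<\delta/4$. Thus $p_{\f,w}(\tilde y_l-\tilde y_k)>\delta(1/\sqrt2-1/4)=:c>0$, so no $p_{\f,w}$-ball of radius $c/2$ contains two of the $\tilde y_k$, and $T(B)$ has no finite cover by such balls, contradicting its total boundedness. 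Hence $||(I-P_n)T||\to0$ and $T$ is ``compact''. The main obstacle is exactly the alignment step: because the state and weight are fixed globally yet must detect every block, one needs $\mathcal A=B(H)$ so that Lemma \ref{ChooseState} can rotate each block into the range of a single vector state through the module action; everything else is a scale-separation bookkeeping secured by the choice $\delta/4$ versus $3\delta/4$.
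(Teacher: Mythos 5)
Your proposal is correct and follows essentially the same route as the paper: assume $\inf_n\|(I-P_n)T\|=\delta>0$, extract disjoint coordinate blocks of the images $Tx_k$, invoke Lemma \ref{ChooseState} to align every block with a single vector state via unitaries, and exhibit an infinite uniformly separated sequence in the image of the unit ball under one seminorm of the form (\ref{seminorms}). The only differences are cosmetic: you transfer the unitaries onto the inputs and build rank-one weights $\omega_j$ from the vector state, where the paper keeps the unitaries in the weight sequence $z_j\upsilon_j$ and in the test vectors $y_n\nu_n$; also your choice of $x_k$ with $\|(I-P_{n_k})Tx_k\|>\delta$ should be relaxed to $>\delta/2$ (the supremum need not be attained), which only shifts the constants.
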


\begin{proof} Let $P_k$ denote the projection to the first $k$ coordinates, i.e.\
$P_k(\xi_1,\xi_2,\dots)\allowbreak=(\xi_1,\dots,\xi_k,0,0,\dots)$. It is well known that all $P_k$ are
"compact".

Suppose $T$ is not "compact". Then
$$\delta=\inf_{n\ge1}||(I-P_k)T||>0.$$
Indeed, otherwise either for some $k$ we have $(I-P_k)T=0$ and hence $T=P_kT$ is "compact", or there is a
sequence of positive integers $k_n$ such that $||T-P_{k_n}T||\to0$ from which it follows that $T$ is
"compact".

To simplify the calculations assume $||T||=1$. Then immediately, $\delta\le1$.

Define the sequence of projections $Q_n\in\{P_1,P_2,\dots\}$ and the sequences of vectors $x_n$, $y_n$ and
$z_n\in l^2(\mathcal A)$ in the following way. Let $Q_0=0$. If $Q_{n-1}$ is already defined, there is $x_n\in
l^2(\mathcal A)$ such that $||x_n||=1$ and $||(I-Q_{n-1})Tx_n||>\delta/2$. Denote $y_n=Tx_n$. Then, by
$||I-Q_{n-1}||=1$,
$$||y_n||\ge||(I-Q_{n-1})y_n||>\frac\delta2.$$
Since $\lim_{k\to+\infty}||(I-P_k)(I-Q_{n-1})y_n||=0$, there is a positive integer $k_n$ such that
$||(I-P_{k_n})(I-Q_{n-1})y_n||<\delta^2/8\le\delta/8$. Define $Q_n=P_{k_n}$ and
\begin{equation}\label{define_z}
z_n=Q_n(I-Q_{n-1})y_n.
\end{equation}

The sequences $y_n$ and $z_n$ have the following properties:

Firstly, by definition, there hold the inequalities
\begin{equation}\label{property1}
||(I-Q_n)(I-Q_{n-1})y_n||<\frac{\delta^2}8\le\frac\delta8,
\end{equation}
\begin{equation}\label{property1a}
||z_n||\le||y_n||\le||T||\,||x_n||=1,
\end{equation}
\begin{equation}\label{property1b}
||z_n||\ge||(I-Q_{n-1})y_n||-||(I-Q_n)(I-Q_{n-1})y_n||>\frac\delta2-\frac\delta8=\frac{3\delta}8.
\end{equation}

Secondly,
\begin{equation}\label{property2}
\left<z_n,y_n\right>=\left<z_n,z_n\right>.
\end{equation}
Indeed, since $z_n=Q_n(I-Q_{n-1})y_n$, we have
\begin{align*}
\left<z_n,y_n\right>&=\left<Q_n(I-Q_{n-1})y_n,y_n\right>=\\
                    &=\left<Q_n(I-Q_{n-1})y_n,(I-Q_{n-1})Q_ny_n\right>=\left<z_n,z_n\right>.
\end{align*}

Thirdly, for $m>n$ we have
\begin{equation}\label{property3}
||\left<z_m,y_n\right>||<\frac\delta8.
\end{equation}
Indeed, for such $m$ and $n$ we have $Q_{n-1}\le Q_n\le Q_{m-1}$, i.e.\ $I-Q_{m-1}\le I-Q_n\le I-Q_{n-1}$,
implying $I-Q_{m-1}=(I-Q_{m-1})(I-Q_n)(I-Q_{n-1})$, and thus
\begin{align*}
\left<z_m,y_n\right>&=\left<(I-Q_{m-1})z_m,y_n\right>=\\
    &=\left<z_m,(I-Q_{m-1})(I-Q_n)(I-Q_{n-1})y_n\right>=\\
    &=\left<z_m,(I-Q_n)(I-Q_{n-1})y_n\right>.
\end{align*}
Therefore, by (\ref{property1}) and (\ref{property1b})
$$||\left<z_m,y_n\right>||\le||z_m||\,||(I-Q_n)(I-Q_{n-1})y_n||\le\frac{\delta^2}8.$$

Let us construct a seminorm $p$, continuous in $\tau$, and a totally discrete sequence from
$T(\overline{B_{||\cdot||}(0;1)})$. Since by (\ref{property1b})
$||z_n||^2=||\upsilon^*_n\left<z_n,z_n\right>\nu_n||>(3\delta/8)^2$, we can choose $\f$ and $\upsilon_j$,
$\nu_j\in\mathcal A$ according to Lemma \ref{ChooseState}, such that
\begin{equation}\label{phi_choosed}
\f(\upsilon_n^*\left<z_n,z_n\right>\nu_n)>\frac{9\delta^2}{64}.
\end{equation}
Consider the seminorm $p$ given by
$$p(x)=\sqrt{\sum_{j=1}^{+\infty}|\f(\left<z_j\upsilon_j,x\right>)|^2}$$
By (\ref{define_z}) there is a sequence $\zeta_j\in\mathcal A$ such that
$$z_k=(0,\dots,0,\zeta_{k_{n-1}+1},\dots,\zeta_{k_n},0,\dots).$$
Define $\omega_n=\zeta_n\upsilon_n/\f(\upsilon_n^*\zeta^*_n\zeta_n\upsilon_n)^{1/2}$. Obviously
$\f(\omega_n^*\omega_n)=1$. Also, for $x=(\xi_1,\xi_2,\dots)$ we have
\begin{align*}
|\f(\left<z_n\upsilon_n,x\right>)|^2&=\left|\sum_{j=k_{n-1}+1}^{k_n}\f(\upsilon_n^*\zeta^*_j\zeta_j\upsilon_n)^{1/2}\f(\omega_j^*\xi_j)\right|^2\le\\
    &\le\sum_{j=k_{n-1}+1}^{k_n}\f(\upsilon_n\zeta^*_j\zeta_j\upsilon_n)\sum_{j=k_{n-1}+1}^{k_n}|\f(\omega_j^*\xi_j)|^2=\\
    &=\f(\upsilon^*_n\left<z_n,z_n\right>\upsilon_n)\sum_{j=k_{n-1}+1}^{k_n}|\f(\omega_j^*\xi_j)|^2
\end{align*}
Including (\ref{property1a}) we obtain
$\f(\upsilon^*_n\left<z_n,z_n\right>\upsilon_n)\le||\upsilon^*_n\left<z_n,z_n\right>\upsilon_n||=||z_n||^2\le1$
and hence
$$p(x)^2=\sum_{n=1}^{+\infty}|\f(\left<z_n,x\right>)|^2\le\sum_{j=1}^{+\infty}|\f(\omega_j^*\xi_j)|^2=
    p_{\f,\omega_1,\dots,\omega_n,\dots}(x)^2.$$
Thus, we conclude that $p$ is well defined and also that it is continuous with respect to $\tau$.

Also, $||x_n\nu_n||=||x_n||$, i.e.\ $y_n\nu_n=Tx_n\nu_n\in T(\overline{B(0;1)})$. Finally we shall prove that
$y_n\nu_n$ is a totally discrete sequence. Indeed, for $m>n$ we have
\begin{align*}
p(y_m\nu_m-y_n\nu_n)&\ge|\f(\left<z_m\upsilon_m,y_m\nu_m-y_n\nu_n\right>)|\ge\\
    &\ge|\f(\upsilon_m^*\left<z_m,y_m\right>\nu_m)|-|\f(\upsilon_m^*\left<z_m,y_n\right>\nu_n)|.
\end{align*}
However, by (\ref{property2}) and (\ref{phi_choosed}),
$$|\f(\upsilon_m\left<z_m,z_m\right>\nu_m)|>\frac{9\delta^2}{64}$$
and, by (\ref{property3})
$$|\f(\upsilon_m^*\left<z_m,y_n\right>\nu_n)|\le||\left<z_m,y_n\right>||<\frac{\delta^2}8.$$
Therefore
$$p(y_m\nu_m-y_n\nu_n)>\frac{9\delta^2}{64}-\frac{\delta^2}8=\frac{\delta^2}{64}.$$
\end{proof}

\section{An example and a comment}

The proof of Theorem \ref{OtherDirection} depends on Lemma \ref{ChooseState}. Hence it is valid for all
unital $W^*$-algebras that satisfy the mentioned Lemma. We do not know how to describe such algebras, but it
should be mentioned that Lemma \ref{ChooseState} does not hold for infinite dimensional commutative
$W^*$-algebras.

\begin{example}
In any infinite dimensional commutative $W^*$ algebra $\mathcal A$, there is a sequence $p_j$ of nontrivial
mutually orthogonal projections. Since $\sum_{j=1}^np_j$ is an increasing sequence,
$p=\sum_{j=1}^{+\infty}p_j\in\mathcal A$. Therefore, for an arbitrary normal state $\f$ the series
$\sum_{j=1}^{+\infty}\f(p_j)$ is convergent. The algebra is commutative, and for all unitary $\upsilon_j$,
$\nu_j$ we have
$$|\f(\upsilon_jp_j\nu_j)|=|\f(p_j\upsilon_j\nu_j)|\le\f(p_j)^{1/2}\f(\nu_j^*\upsilon_j^*\upsilon_j\nu_j)^{1/2}\to0.$$
Thus, Lemma \ref{ChooseState} is not valid for commutative $W^*$ algebras.

Moreover, we can use this sequence of projection to construct an operator which is compact but is not
"compact". Indeed, let $T:l^2(\mathcal A)\to l^2(\mathcal A)$ be the operator defined by
\begin{equation}\label{counterexample}
Tx=T(\xi_1,\xi_2,\dots)=(p_1\xi_1,p_2\xi_2,\dots).
\end{equation}
Then, $T$ is not "compact". Indeed, if it is "compact", for all $\e>0$ there is an operator $S$ of the form
$S=\sum_{j=1}^n\lambda_j\Theta_{y_j,z_j}$ such that $||T-S||<\e/3$. Since $P_kz_j-z_j\to0$, as $k\to+\infty$
for all $1\le j\le n$ implies $||P_kS-S||\to0$, there is $k$ large enough such that $||P_kS-S||<\e/3$ and
then $||T-P_kT||\le||T-S||+||S-P_kS||+||P_k(S-T)||<\e$. However, as it is easy to see
$||T-P_kT||\ge||Te_k-P_kTe_k||=||1-p_k||=1$.

On the other hand, for an arbitrary semi norm of the form (\ref{seminorms}) we have $p((T-P_kT)x)\to0$,
uniformly with respect to $x\in B_{||\cdot||}(0,1)$. Indeed, $\mathcal A$ is commutative and therefore
$\xi_j^*\xi_j\eta_j^*\eta_j\le||\xi_j||^2\eta_j^*\eta_j$, and further
$\f(\xi_j^*\xi_j\eta_j^*\eta_j)\le||\xi_j||^2\sup_j\f(\eta_j^*\eta_j)\le 1$, by $||x||<1$ and
(\ref{admissible_sequence}). Thus, we have
$$p((T-P_kT)x)^2=\sum_{j=k+1}^{+\infty}|\f(\eta_j^*p_j\xi_j)|^2\le\sum_{j>k}\f(p_j)\f(\xi_j^*\xi_j\eta_j^*\eta_j)\le
    \sum_{j>k}\f(p_j)\to0.$$
Hence, $T$ is compact by Lemma \ref{CompletelyLimit}
%
\end{example}

\begin{remark} Topology $\tau$ defined in this note highly depends on coordinates, and therefore it is
inappropriate for Hilbert modules other then $l^2(\mathcal A)$. One might try to define a topology by semi
norms
\begin{equation}\label{seminormsmore}
p_{\f,z_j}(x)=\sqrt{\sum_{j=1}^{+\infty}|\f(\left<z_j,x\right>)|^2},
\end{equation}
where $\f$ is a normal state, and $z_j$ is an orthogonal sequence, that satisfies
$\sup_{j\ge1}\f(\left<z_j,z_j\right>)=1$. These semi norms are generalization of those given by
(\ref{seminorms}). Indeed, semi norm (\ref{seminormsmore}) become semi norms (\ref{seminorms}) by choosing
$z_j=e_j\eta_j$.

However, such new topology is in the case of $l^2(\mathcal A)$ larger then $\tau$, even if we suppose that
$z_j$ is even more orthonormal. Namely, if $\mathcal A=B(H)$, $H$ infinite dimensional, there is a Cuntz
$\infty$-tuple, i.e.\ a sequence of isometries $v_j$ satisfying $v_j^*v_j=1$ and
$\sum_{j=1}^{+\infty}v_jv_j^*=1$. Then, it is easy to see that $x_j=(v_j,0,0,\dots)$ is orthonormal. But, in
the semi norm $p_{\f,x_j}$ of the form (\ref{seminormsmore}) the sequence $x_j$ itself is totally discrete.
\end{remark}

\subsection*{Acknowledgement}
The authors was supported in part by the Ministry of education and science, Republic of Serbia, Grant
\#174034.

\end{document}